\documentclass[12pt]{article}
\usepackage{array}
\usepackage{amsthm,amsmath,amssymb,color}
\usepackage{fullpage}

\usepackage{color}
\usepackage[normalem]{ulem}

\newtheorem{thm}{Theorem}[section]
\newtheorem{theorem}[thm]{Theorem}
\newtheorem{lemma}[thm]{Lemma}
\newtheorem{obs}[thm]{Observation}

\newtheorem{cons}[thm]{Construction}
\newtheorem{ex}[thm]{Example}

\begin{document}

\title{Sharp bounds for the Randi{\' c} index of graphs with given minimum and maximum degree}
\author{Suil O\thanks{Department of Applied Mathematics and Statistics, The State University of New York, Korea, Incheon, 21985, suil.o@sunykorea.ac.kr}\, and
Yongtang Shi\thanks{Center for Combinatorics and LPMC, Nankai University, Tianjin, 300071, China, shi@nankai.edu.cn}
}

\maketitle

\begin{abstract}
The Randi{\' c} index of a graph $G$, written $R(G)$, is the sum of $\frac 1{\sqrt{d(u)d(v)}}$ over all edges $uv$ in $E(G)$.
Let $d$ and $D$ be positive integers $d < D$.
In this paper, we prove that
if $G$ is a graph with minimum degree $d$ and maximum degree $D$, then $R(G) \ge \frac{\sqrt{dD}}{d+D}n$; equality holds only when $G$ is an $n$-vertex $(d,D)$-biregular. Furthermore, we show that if $G$ is an $n$-vertex connected graph with minimum degree $d$ and maximum degree $D$, then
$R(G) \le \frac n2- \sum_{i=d}^{D-1}\frac 12 \left( \frac 1{\sqrt{i}} - \frac 1{\sqrt{i+1}}\right)^2$; it is sharp for infinitely many $n$, and we characterize when equality holds in the bound.

\end{abstract}

\section{Introduction}
The \textit{Randi{\'c} index} of a graph $G$, written $R(G)$, is defined as follows:
$$R(G)=\sum_{uv \in E(G)} \frac 1{\sqrt{d(u)d(v)}},$$
where for a vertex $v \in V(G)$, $d(v)$ is the degree of $v$. The concept was introduced by Milan Randi\'c
under the name ``\textit{branching index}" or ``\textit{connectivity index}"  in 1975 \cite{R}, which has a good
correlation with several physicochemical properties of alkanes.  In 1998 Bollob\'as and Erd\"{o}s \cite{BE}
generalized this index by replacing $-\frac{1}{2}$ with any real
number $\alpha$, which is called the general Randi\'c index. There are also many other variants of 
Randi\'c index \cite{DvorakLidickySkrekovski_y2011_p434,KnorLuzarSkrekovski_y2015_p160,Shi}. For more results on 
Randi\'c index,
see the survey paper \cite{LS2007}.

Many important mathematical properties of 
Randi\'c index have been established. Especially, the relations between 
Randi\'c index and other graph parameters have been widely studied, such as the minimum degree \cite{BE}, the chromatic index \cite{LS}, the diameter \cite{DvorakLidickySkrekovski_y2011_p434,YL}, the radius \cite{CH}, the average distance \cite{CH}, the eigenvalues \cite{AD,AHZ}, and the matching number \cite{AHZ}.

In 1988, Shearer proved if $G$ has no isolated vertices then $R(G)\ge \sqrt{|V(G)|}/2$ (see \cite{F}).  A few months
later Alon improved this bound to $\sqrt{|V(G)|}-8$ (see \cite{F}). In 1998, Bollob\'as and
Erd\"{o}s \cite{BE} proved that the Randi\'c index of an $n$-vertex graph $G$ without isolated vertices is at least $\sqrt{n-1}$, with equality if and only if $G$ is a star. In \cite{F},
Fajtlowicz mentioned that Bollob\'as and
Erd\"{o}s asked the minimum value for the Randi\'c index in a graph with given minimum degree. Then the question was answered in various ways \cite{AH,DFR,LLT,LS2008}.

For a graph $G$, we denote its complement by $\overline{G}$. We also denote by $K_n$ the complete graph with $n$ vertices and by $K_{n}-e$ the graph obtained from the complete graph $K_n$ by deleting an edge. A graph is $(a,b)$-biregular if it is bipartite with the vertices of one part all having degree $a$ and the others all having degree $b$.

Aouchiche et al.~\cite{AHZ1} studied the relations between Randi\'c index and the minimum degree, the maximum degree, and the average degree, respectively. They proved that for any connected graph $G$ on $n$ vertices with minimum degree $d$ and maximum degree $D$, then $R(G) \ge \frac{d}{d+D}n$.

In this paper, we prove that
if $G$ is an $n$-vertex graph with minimum degree $d$ and maximum degree $D$, then $R(G) \ge \frac{\sqrt{dD}}{d+D}n$, which improves the result of Aouchiche et al. in \cite{AHZ1}; equality holds only when $G$ is an $n$-vertex $(d,D)$-biregular. Furthermore, we show that if $G$ is an $n$-vertex connected graph with minimum degree $d$ and maximum degree $D$, then
$R(G) \le \frac n2- \sum_{i=d}^{D-1}\frac 12 \left( \frac 1{\sqrt{i}} - \frac 1{\sqrt{i+1}}\right)^2$; it is sharp for infinitely many $n$.

\section{Main Results}

In this section, we first give a sharp lower bound for $R(G)$ in an $n$-vertex graph with givien minimum and maximum degree, improving the one that Aouchiche et al.~\cite{AHZ1} proved.

\begin{theorem}
\label{thm:lower}
 If $G$ is an $n$-vertex graph with minimum degree $d$ and maximum degree $D$, then $R(G) \ge \frac{\sqrt{dD}}{d+D}n$. Equality holds only when $G$ is an $n$-vertex $(d,D)$-biregular.
\end{theorem}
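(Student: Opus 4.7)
The plan is to reduce the global statement to a pointwise inequality on each edge, which can then be summed using a handshake-style identity. The key is to rewrite the right-hand side $n$ as an edge sum: since no vertex is isolated (because $d\ge 1$), one has
\[
n \;=\; \sum_{v\in V(G)} \frac{d(v)}{d(v)} \;=\; \sum_{uv \in E(G)} \left(\frac{1}{d(u)}+\frac{1}{d(v)}\right).
\]
So it suffices to prove the edge-local inequality
\[
\frac{1}{\sqrt{d(u)d(v)}} \;\ge\; \frac{\sqrt{dD}}{d+D}\left(\frac{1}{d(u)}+\frac{1}{d(v)}\right)
\quad\text{for every edge } uv \in E(G),
\]
and then sum over all edges.

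To establish the edge-local inequality, I would clear denominators to reduce it to
\[
\frac{\sqrt{xy}}{x+y} \;\ge\; \frac{\sqrt{dD}}{d+D} \qquad \text{for all } x,y \in [d,D].
\]
Setting $t=\sqrt{x/y}$, the left-hand side becomes $1/(t+t^{-1})$, and as $(x,y)$ ranges over $[d,D]^2$, the parameter $t$ ranges over $[\sqrt{d/D},\sqrt{D/d}]$. Since $t+t^{-1}$ is convex with minimum at $t=1$ and symmetric under $t\mapsto t^{-1}$, it attains its maximum on this interval at the endpoints, where it equals $\sqrt{d/D}+\sqrt{D/d}=(d+D)/\sqrt{dD}$. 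Thus $\frac{\sqrt{xy}}{x+y}\ge\frac{\sqrt{dD}}{d+D}$, with equality if and only if $\{x,y\}=\{d,D\}$.

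Summing the edge-local inequality over $E(G)$ and applying the identity for $n$ above yields $R(G)\ge \frac{\sqrt{dD}}{d+D}\,n$. For the equality characterization, if $R(G)=\frac{\sqrt{dD}}{d+D}\,n$ then the edge-local inequality must be tight on every edge, forcing every edge to have one endpoint of degree exactly $d$ and one of degree exactly $D$. Since $d\ge 1$ rules out isolated vertices, every vertex has degree $d$ or $D$, and the edge condition means $G$ is bipartite with parts $V_d=\{v:d(v)=d\}$ and $V_D=\{v:d(v)=D\}$, i.e., $G$ is $(d,D)$-biregular.

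The proof is essentially routine once the right edge-local inequality is identified; the only genuinely creative step is noticing the handshake-style rewriting of $n$, which pins down exactly which weighted edge inequality to prove. After that, the two-variable optimization on $[d,D]^2$ is a short exercise (either by the $t+t^{-1}$ substitution above, or by checking directly that $\frac{\sqrt{xy}}{x+y}$ is monotone in each variable on either side of $\sqrt{xy}=$ the geometric mean, so its minimum over the box sits at an opposite-corner pair). I therefore do not expect any real obstacle beyond these observations.
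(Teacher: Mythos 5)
Your proof is correct, and at its core it runs on the same two ingredients as the paper's: the identity $n=\sum_{uv\in E(G)}\bigl(\tfrac{1}{d(u)}+\tfrac{1}{d(v)}\bigr)$ and the pointwise bound $\tfrac{1}{\sqrt{xy}}\ge\tfrac{\sqrt{dD}}{d+D}\bigl(\tfrac1x+\tfrac1y\bigr)$ for $x,y\in[d,D]$, with equality exactly at $\{x,y\}=\{d,D\}$. The difference is in packaging: the paper partitions the vertex set into degree classes $V_i$, introduces the edge counts $m_{ij}=|[V_i,V_j]|$, and reaches the same conclusion by an algebraic elimination that solves for $m_{dD}$ and substitutes into $R(G)=\sum m_{ij}/\sqrt{ij}$, arriving at $R(G)=\tfrac{\sqrt{dD}}{d+D}n+\sum_{d\le i\le j\le D}\bigl[\tfrac{1}{\sqrt{ij}}-\tfrac{\sqrt{dD}}{d+D}(\tfrac1i+\tfrac1j)\bigr]m_{ij}$, which is precisely your summed edge-local inequality aggregated over degree pairs. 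Your edge-by-edge formulation avoids that bookkeeping entirely, and it has two genuine advantages: you actually prove the key pointwise inequality (via the $t+t^{-1}$ substitution), whereas the paper only asserts the positivity of the bracketed coefficient; and your equality analysis is cleaner and more complete --- tightness on every edge forces each edge to join a degree-$d$ vertex to a degree-$D$ vertex, hence (since $d\ge1$ excludes isolated vertices) $G$ is $(d,D)$-biregular --- compared with the paper's rather terse remark at the end of its proof. So: same mechanism, but your route is the more elementary and self-contained of the two.
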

\begin{proof}
For each $i \in \{d,\ldots,D\}$, let $V_i$ be the set of vertices with degree $i$, and let $n_i=|V_i|$. Note that
\begin{equation}\label{sum}
\sum_{i=d}^Dn_i=n.
\end{equation}

Let $m_{ij} = |[V_i,V_j]|$ for all $i, j \in \{d,\ldots,D\}$,
where $[A,B]$ is the set of edges with one end-vertex in $A$ and the other in $B$. Since $G$ has minimum degree $d$ and maximum degree $D$, we have
\begin{equation}\label{randicsum}
R(G)=\displaystyle \sum_{d \le i \le j \le D}\frac{m_{ij}}{\sqrt{ij}}.
\end{equation}
For fixed $i$, the degree sum over all vertices in $V_i$ can be computed by counting the edges between $V_i$ and $V_j$ over all $j\in\{d,\ldots,D\}$;
\begin{equation}\label{partialsum}
in_i = m_{ii}+\sum_{j=d}^{D}m_{ij}.
\end{equation}
Note that $m_{ii}$ must be counted twice.

By manipulating equation~(\ref{partialsum}), we have the followings:

\begin{equation}\label{smalld}
dn_d=(m_{dd}+\sum_{j=1}^{D}m_{dj}) ~~\Rightarrow ~~n_d-\frac{m_{dD}}{d}=\frac 1d(m_{dd}+\sum_{j=d}^{D-1}m_{dj})
\end{equation}

\begin{equation}\label{largeD}
Dn_D=(m_{DD}+\sum_{j=1}^{D}m_{Dj}) ~~\Rightarrow ~~n_D-\frac{m_{dD}}{D}=\frac 1D(m_{DD}+\sum_{j=d+1}^{D}m_{jD})
\end{equation}

\begin{equation}\label{fracsum}
n_i=\frac{1}i(m_{ii}+\sum_{j=d}^{D}m_{ij})
\end{equation}

By equations~(\ref{sum}) and~(\ref{fracsum}), we have
\begin{equation}\label{dD}
n_d+n_D=n-\sum_{i=d+1}^{D-1}n_i=n-\sum_{i=d+1}^{D-1}\frac{1}i(m_{ii}+\sum_{j=d}^{D}m_{ij}).
\end{equation}
By combining equations~(\ref{smalld}),~(\ref{largeD}), and~(\ref{dD}),  we have
$$n_d-\frac{m_{dD}}{d} + n_D-\frac{m_{dD}}{D}=n-\sum_{i=d+1}^{D-1}\frac{1}i(m_{ii}+\sum_{j=d}^{D}m_{ij}) -\left(\frac{d+D}{dD}\right)m_{dD}$$
$$=\frac 1d(m_{dd}+\sum_{j=d}^{D-1}m_{dj})+\frac 1D(m_{DD}+\sum_{j=d+1}^{D}m_{jD}) ~~\Rightarrow$$
$$ \left(\frac{d+D}{dD}\right)m_{dD} = n- \sum_{i=d+1}^{D-1}\frac{1}i(m_{ii}+\sum_{j=d}^{D}m_{ij}) - \frac 1d(m_{dd}+\sum_{j=d}^{D-1}m_{dj})-\frac 1D(m_{DD}+\sum_{j=d+1}^{D}m_{jD})$$
 \begin{equation}\label{mdD}
 \Rightarrow ~~m_{dD}= \frac{dD}{d+D}n-\frac{dD}{d+D}\left[-(\frac 1d+\frac 1D)m_{dD}+\sum_{d \le i \le j \le D} \left(\frac 1i + \frac 1j \right) m_{ij} \right]
\end{equation}

By plugging equation~(\ref{mdD}) into~(\ref{randicsum}), we have
\begin{equation}\label{final}
\sum_{d\le i \le j \le D} \frac{m_{ij}}{\sqrt{ij}}=\frac{\sqrt{dD}}{d+D}n + \sum_{d \le i \le j \le D} \left[\frac 1{\sqrt{ij}} - \frac{\sqrt{dD}}{d+D}\left(\frac 1i + \frac 1j\right)\right]m_{ij}.
\end{equation}
Note that except when $i=d$ and $j=D$, we have $\frac 1{\sqrt{ij}} - \frac{\sqrt{dD}}{d+D}\left(\frac 1i + \frac 1j\right)>0$.
Since $m_{ij}$ is non-negative,
we have $$R(G) \ge \frac{\sqrt{dD}}{d+D}n.$$

If there are vertices $u$ and $v$ such that  $d(u)\neq d$ or $d(v)\neq D$,
then $m_{d(u)d(v)}>0$. Thus the
equality holds only when $G$ is $(d,D)$-biregular.
\end{proof}

From now, we first construct the class of graphs with mimimum degree $d$ and maximum degree $D$ that we will show are those achieving equality in Theorem~\ref{thm:upper}.

\begin{cons}{\rm
		Let $d$ and $D$ be positive integers with $d < D$, and let $H$ be a graph with minimum degree $d$ and maximum degree $D$. Suppose that for $i \in [d,D]$, $V_i(H)$ is the set of vertices with degree $i$ in $V(H)$.
Let ${\cal F}$ be the family of graphs $H$ such that
for $i \in [d,D-1]$, there exists only one vertex in $V_i(H)$ having exactly one neighbor in $V_{i+1}(H)$.
}
\end{cons}

In Example~\ref{ex}, we show that this family is nonempty.

\begin{ex}\label{ex}{\rm
Let $d$ and $D$ be odd positive integers $1\le d < D$. Suppose that
$$V_i=\begin{cases}
K_1 \text{ if } d=1 \text{ and } i=1\\[2mm]
\overline{P_3+\frac{d-1}{2}K_2} \text{ if } d\ge 3 \text{ and } i=d \text{ or } D\\[2mm]
K_{i+1} -e\text{ if } i \in [d+1,D-1].
\end{cases}$$
Note that for $i \in [d,D]$, each vertex in $V_i$ has degree $i$, except for one vertex when $i=d$ or $D$, or two vertices when $i\in [d+1,D-1]$.
For $d\le i \le D-1$, add an edge joining $V_i$ and $V_{i+1}$ so that for $j \in [d,D]$, every vertex in $V_j$ in the resulting graph $F_{d,D}$ has degree $j$.
}
\end{ex}

Recall that Caporossi et al. \cite{CGHP} gave another description of the Randi\'c index by using linear programming.
\begin{theorem}\label{randicexpression}
If $G$ is an $n$-vertex graph without isolated vertices,
then $$R(G) = \frac n2 - \sum_{uv \in E(G)} \frac 12\left(\frac 1{\sqrt{d(u)}} -\frac 1{\sqrt{d(v)}} \right)^2.$$
\end{theorem}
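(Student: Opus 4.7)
The identity is really a bookkeeping statement, so the plan is to expand the squared difference on the right-hand side and collect terms until $R(G)$ appears explicitly.

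First I would expand
$$\left(\frac{1}{\sqrt{d(u)}} - \frac{1}{\sqrt{d(v)}}\right)^2 = \frac{1}{d(u)} + \frac{1}{d(v)} - \frac{2}{\sqrt{d(u)d(v)}},$$
and split the sum $\sum_{uv\in E(G)} \tfrac12(\cdots)^2$ into two pieces. The cross term yields
$$\sum_{uv\in E(G)} \frac{1}{\sqrt{d(u)d(v)}} = R(G),$$
immediately by the definition of the Randi\'c index.

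Next I would handle the remaining piece, namely $\tfrac12\sum_{uv\in E(G)} \bigl(\tfrac{1}{d(u)} + \tfrac{1}{d(v)}\bigr)$, by switching the order of summation. Each vertex $v$ contributes the summand $\tfrac{1}{2d(v)}$ to exactly $d(v)$ edges, producing a total contribution of $\tfrac{1}{2}$ per vertex; the assumption that $G$ has no isolated vertices guarantees $d(v)\ge 1$, so no term $\tfrac{1}{d(v)}$ is undefined. Summing over the $n$ vertices gives $\tfrac{n}{2}$.

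Combining the two pieces,
$$\sum_{uv\in E(G)} \frac12\left(\frac{1}{\sqrt{d(u)}} - \frac{1}{\sqrt{d(v)}}\right)^2 = \frac{n}{2} - R(G),$$
and rearranging yields the claimed formula. There is no real obstacle here: the proof is a one-line algebraic identity once the square is expanded, and the only subtlety is the mild hypothesis that $G$ has no isolated vertex, which is needed solely to make each $\tfrac{1}{d(v)}$ well-defined when one rewrites the edge sum as a vertex sum.
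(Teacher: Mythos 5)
Your proof is correct: expanding the square, the cross term recovers $R(G)$ by definition, and the remaining sum $\frac12\sum_{uv\in E(G)}\bigl(\frac1{d(u)}+\frac1{d(v)}\bigr)$ becomes $\frac12\sum_{v\in V(G)}d(v)\cdot\frac1{d(v)}=\frac n2$ after switching to a vertex sum, which is exactly where the no-isolated-vertex hypothesis is used. Note that the paper itself does not prove this identity at all: it is quoted as a known result of Caporossi, Gutman, Hansen and Pavlovi\'c \cite{CGHP}, who originally obtained it in the context of a linear-programming reformulation of the Randi\'c index. So your argument is not a restatement of the paper's proof but a short, self-contained verification that makes the borrowed lemma independent of the citation; the only thing the citation buys that your computation does not is the historical/LP context, while your route buys completeness at essentially no cost. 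One tiny stylistic point: when you ``switch the order of summation'' it is worth saying explicitly that each edge $uv$ contributes the two terms $\frac1{2d(u)}$ and $\frac1{2d(v)}$, so that vertex $v$ accrues $\frac1{2d(v)}$ once for each of the $d(v)$ edges incident to it --- that is the precise double-counting step, and as written it is stated slightly backwards (a vertex contributes to edges, rather than edges contributing to vertices), though the intended count is clearly right.
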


Lemma~\ref{basic} shows that the graph $F_{d,D}$ is included in the family ${\cal F}$.

\begin{lemma} \label{basic}
	If the graph $F_{d,D}$ in Example~\ref{ex} has $n$ vertices, then
$$R(F_{d,D})=\frac n2- \sum_{i=d}^{D-1}\frac 12 \left( \frac 1{\sqrt{i}} - \frac 1{\sqrt{i+1}}\right)^2.$$
\end{lemma}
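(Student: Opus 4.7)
The plan is to apply Theorem~\ref{randicexpression} directly and exploit the fact that in $F_{d,D}$ every edge is either ``monochromatic'' (joining two vertices of the same degree) or joins a vertex of degree $i$ to a vertex of degree $i+1$ for exactly one value of $i$. This structural observation is essentially built into the construction in Example~\ref{ex}, so the identity of Caporossi, Gutman, Hansen, and Pavlovi\'c reduces the statement to an edge-counting check.

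First I would partition $E(F_{d,D})$ into two classes: the \emph{internal} edges lying inside some $V_i$, and the $D-d$ \emph{bridging} edges added, one for each $i \in [d,D-1]$, between $V_i$ and $V_{i+1}$. By construction every vertex in $V_i$ has degree exactly $i$ in the final graph $F_{d,D}$, so each internal edge of $V_i$ joins two vertices of common degree $i$ and contributes $0$ to the sum on the right-hand side of Theorem~\ref{randicexpression}. Each bridging edge between $V_i$ and $V_{i+1}$, on the other hand, contributes exactly $\tfrac{1}{2}\bigl(\tfrac{1}{\sqrt{i}}-\tfrac{1}{\sqrt{i+1}}\bigr)^2$, and there is precisely one such edge for each $i \in [d,D-1]$.

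Substituting into Theorem~\ref{randicexpression} then immediately yields
$$R(F_{d,D}) = \frac{n}{2} - \sum_{i=d}^{D-1} \frac{1}{2}\left(\frac{1}{\sqrt{i}} - \frac{1}{\sqrt{i+1}}\right)^2,$$
which is the desired identity. I do not anticipate any real obstacle: the proof is a one-step application of Theorem~\ref{randicexpression}, and the only thing requiring verification, namely that the internal edges of $V_i$ indeed connect two vertices of degree $i$ in $F_{d,D}$, is guaranteed by the construction (which was designed precisely so that adding one bridging edge per pair $(V_i,V_{i+1})$ lifts every vertex of $V_i$ to degree $i$).
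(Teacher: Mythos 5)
Your proof is correct and follows essentially the same route as the paper: apply the identity of Theorem~\ref{randicexpression}, observe that the only edges with unequal endpoint degrees are the $D-d$ bridging edges, one joining a degree-$i$ vertex to a degree-$(i+1)$ vertex for each $i\in[d,D-1]$, and sum their contributions. The paper's proof is just a terser version of exactly this argument.
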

\begin{proof}
Note that there are exactly $D-d$ edges $uv$ such that $d(u)$ and $d(v)$ are different. In fact, for such an edge $uv$, we have $d(v)=d(u)+1$ if $d(v)>d(u)$. By Theorem~\ref{randicexpression}, we have the desired result.
\end{proof}

Observation~\ref{septool} is used in Theorem~\ref{thm:upper}.

\begin{obs}\label{septool}
For $1 \le x < y < z$, we have
$$\left(\frac 1{\sqrt{x}} - \frac 1{\sqrt{z}} \right)^2 > \left(\frac 1{\sqrt{x}} - \frac 1{\sqrt{y}} \right)^2 + \left(\frac 1{\sqrt{y}} - \frac 1{\sqrt{z}} \right)^2.$$
\end{obs}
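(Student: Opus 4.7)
The plan is to reduce the stated inequality to a clean positivity statement by a change of variables. I would set $a = 1/\sqrt{x}$, $b = 1/\sqrt{y}$, $c = 1/\sqrt{z}$; the hypothesis $1 \le x < y < z$ then yields $a > b > c > 0$ (the extra information that $a \le 1$ plays no role in what follows, so the inequality actually holds for all $0 < x < y < z$). The claim becomes the purely algebraic inequality $(a-c)^2 > (a-b)^2 + (b-c)^2$ in three ordered positive reals.

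Next, I would compute the difference $(a-c)^2 - (a-b)^2 - (b-c)^2$ by direct expansion. The $a^2$ and $c^2$ terms cancel, leaving
$$(a-c)^2 - (a-b)^2 - (b-c)^2 = 2(ab + bc - ac - b^2).$$
Grouping by grouping, $ab - b^2 + bc - ac = b(a-b) - c(a-b) = (a-b)(b-c)$, so the difference equals $2(a-b)(b-c)$.

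Finally, since $a > b > c$, both factors $a-b$ and $b-c$ are strictly positive, so $2(a-b)(b-c) > 0$, which gives the observation. I do not foresee any real obstacle: the proof is essentially a one-line factorization after the substitution, and the only thing to watch is the bookkeeping of signs when expanding the three squares.
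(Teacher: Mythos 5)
Your proof is correct and is essentially the same as the paper's: the paper also shows that the difference of the two sides equals $2\left(\frac 1{\sqrt{x}} - \frac 1{\sqrt{y}}\right)\left(\frac 1{\sqrt{y}} - \frac 1{\sqrt{z}}\right) > 0$, which is exactly your factorization $2(a-b)(b-c)$ after the substitution. The change of variables is purely notational, so there is nothing further to add.
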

\begin{proof}
$$\left(\frac 1{\sqrt{x}} - \frac 1{\sqrt{z}} \right)^2 - \left(\frac 1{\sqrt{x}} - \frac 1{\sqrt{y}} \right)^2 - \left(\frac 1{\sqrt{y}} - \frac 1{\sqrt{z}} \right)^2= 2\left(\frac 1{\sqrt{x}} -\frac 1{\sqrt{y}} \right)
\left(\frac 1{\sqrt{y}} -\frac 1{\sqrt{z}} \right)>0.$$
\end{proof}

Now, we give a sharp upper bound for $R(G)$ in an $n$-vertex connected graph $G$ with given minimum and maximum degree.  Note that for a regular graph $G$, $R(G)=\frac{|V(G)|}2$. Thus we assume that $d< D$ in Theorem~\ref{thm:upper}. 
\begin{theorem}\label{thm:upper}
If $G$ is an $n$-vertex connected graph with minimum degree $d$ and maximum degree $D$, then $$R(G) \le \frac n2- \sum_{i=d}^{D-1}\frac 12 \left( \frac 1{\sqrt{i}} - \frac 1{\sqrt{i+1}}\right)^2.$$
Equality holds only for $G \in {\cal F}$.
\end{theorem}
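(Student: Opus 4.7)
The plan is to reformulate the claim via Theorem~\ref{randicexpression} as the equivalent inequality
$$\sum_{uv \in E(G)} \left(\frac 1{\sqrt{d(u)}} - \frac 1{\sqrt{d(v)}}\right)^2 \ \ge \ \sum_{i=d}^{D-1}\left(\frac 1{\sqrt i} - \frac 1{\sqrt{i+1}}\right)^2,$$
so the task is to show the sum of edge-differences on the left dominates a telescoping-type sum on the right, and then to identify when equality occurs.

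The engine is to route each ``threshold'' $i \in \{d,\ldots,D-1\}$ to a single edge that crosses it. Since $V_d$ and $V_D$ are both nonempty and $G$ is connected, for each such $i$ the sets $V_{\le i}:=V_d \cup \cdots \cup V_i$ and $V_{>i}:=V_{i+1} \cup \cdots \cup V_D$ are both nonempty, so there exists an edge $e_i=u_iv_i$ with $d(u_i) \le i < d(v_i)$. I would fix any such assignment $f(i):=e_i$. For an edge $e=uv$ with $d(u)<d(v)$, the fibre $f^{-1}(e)$ is automatically contained in $\{d(u),\ldots,d(v)-1\}$, and iterating Observation~\ref{septool} yields
$$\left(\frac 1{\sqrt{d(u)}} - \frac 1{\sqrt{d(v)}}\right)^2 \ \ge \ \sum_{k=d(u)}^{d(v)-1}\left(\frac 1{\sqrt k} - \frac 1{\sqrt{k+1}}\right)^2 \ \ge \ \sum_{k \in f^{-1}(e)}\left(\frac 1{\sqrt k} - \frac 1{\sqrt{k+1}}\right)^2.$$
Summing over all edges --- the fibres of $f$ partition $\{d,\ldots,D-1\}$, and edges with $d(u)=d(v)$ contribute zero --- delivers the desired bound.

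For the equality characterisation I would trace the chain backwards. Observation~\ref{septool} is strict, so any edge $uv$ with $d(v)>d(u)+1$ leaks; hence every edge must satisfy $|d(u)-d(v)|\le 1$. Next, any edge with $d(u)<d(v)$ whose fibre is empty contributes a strictly positive amount that cannot be absorbed, so every such edge must lie in $\mathrm{image}(f)$; combined with the previous point, its fibre must then be the singleton $\{d(u)\}$. Because $f(i)$ is a single edge for each $i$, this forces exactly one edge to join $V_i$ and $V_{i+1}$ for every $i \in \{d,\ldots,D-1\}$, which is precisely the condition $G \in \mathcal{F}$.

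The main subtlety I anticipate is the bookkeeping in the equality case: one must verify simultaneously that every edge with $|d(u)-d(v)|\ge 2$ is forbidden, that every ``ascending'' edge is assigned to its lower-endpoint threshold, and that each threshold is used by exactly one edge. Connectivity is used essentially exactly once --- to guarantee the existence of an $e_i$ for each $i$ --- and without it the bound simply fails, since a disconnected graph can have components that miss entire ranges of degrees.
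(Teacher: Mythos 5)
Your argument is correct, and it proves both the bound and the equality direction claimed in the theorem, but the charging mechanism differs from the paper's. The paper also reduces the claim via Theorem~\ref{randicexpression} and Observation~\ref{septool}, but then it picks a single shortest path $P$ from a vertex of $V_d$ to a vertex of $V_D$, discards all other edges, and uses a discrete intermediate-value argument along $P$ to see that every threshold $k\in\{d,\dots,D-1\}$ is straddled by some edge of $P$; iterating Observation~\ref{septool} on each path edge then gives the bound, and the equality analysis must rule out repetitions and non-monotone degree steps along $P$ to conclude that the degrees on $P$ are exactly $d,d+1,\dots,D$. You instead route each threshold $i$ through the degree cut $\bigl(V_{\le i},V_{>i}\bigr)$, whose two sides are nonempty because $V_d,V_D\neq\emptyset$, and use connectivity to select one crossing edge $e_i$; the resulting assignment $f$ has fibres contained in the degree ranges of their edges, so summing the iterated Observation~\ref{septool} over edges immediately dominates the telescoping sum. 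What your version buys is a cleaner equality case: since each threshold is charged to exactly one edge, term-by-term equality directly forces every edge with distinct endpoint degrees to be some $e_i$ with $d(v)=d(u)+1$ and fibre $\{d(u)\}$, hence exactly one edge between $V_i$ and $V_{i+1}$ and none between nonconsecutive classes, which is the membership condition for ${\cal F}$; the paper's path-based bookkeeping reaches the same conclusion but needs the extra observation that any equal-degree or descending step, or any double covering of a threshold along $P$, already produces strict inequality. Connectivity plays the same single role in both proofs (existence of the crossing edges, respectively of the path), so the two arguments are of essentially the same strength, with yours slightly more streamlined in the equality characterisation.
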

\begin{proof} 
Let $V_d$ and $V_D$ be the sets of vertices with degree $d$ and $D$, respectively.
Among paths whose one end-vertex is in $V_d$ and the other is in $V_D$, consider a shortest path $P=x_0 ... x_l$, where $x_0 \in V_d$ and $x_l \in V_D$.  For $i \in [0,l-1]$, if $|d(x_i)-d(x_{i+1})| \ge 2$ (say $d(x_i) < d(x_{i+1})$), then by Observation~\ref{septool},
$$\left(\frac 1{\sqrt{d(x_i)}} - \frac 1{\sqrt{d(x_{i+1})}} \right)^2 > \left(\frac 1{\sqrt{d(x_i)}} - \frac 1{\sqrt{d(x_i)+1}} \right)^2 + \left(\frac 1{\sqrt{d(x_i)+1}} - \frac 1{\sqrt{d(x_{i+1})}} \right)^2$$
$$> \sum_{j=d(x_i)}^{d(x_{i+1})-1} \left( \frac 1{\sqrt{j}} - \frac 1{\sqrt{j+1}}\right)^2.$$

Note that for any positive integer $k$ between $d$ and  $D$,
there exists $i \in [0,l-1]$ such that $k \in [d(x_i), d(x_{i+1})]$,
since $P$ has end-vertices with degree $d$ and $D$ and is clearly connected.
Thus, by Theorem~\ref{randicexpression}, we have
\begin{align*}R(G) = \frac n2 - \sum_{uv \in E(G)} \frac 12\left(\frac 1{\sqrt{d(u)}} -\frac 1{\sqrt{d(v)}} \right)^2 &\le \frac n2 - \sum_{uv \in E(P)} \frac 12\left(\frac 1{\sqrt{d(u)}} -\frac 1{\sqrt{d(v)}} \right)^2\\[2mm]
&\le \frac n2- \sum_{i=d}^{D-1}\frac 12 \left( \frac 1{\sqrt{i}} - \frac 1{\sqrt{i+1}}\right)^2.
\end{align*}

Equality holds in this bound if and only if edges $uv$ with $d(u)\neq d(v)$ are only on the path $P$ and $d(x_{i+1})-d(x_{i})=0$ or 1. Note that $d(x_0)=d, d(x_1)=d+1, \ldots, d(x_{l-1})=D-1, d(x_{l})=D$. Thus $G$ must be in ${\cal F}$.
\end{proof}

\section*{Acknowledgements}
Suil O would like to thank the Chern Institute of Mathematics, Nankai University, for their generous hospitality. He was able to carry out part of this research during his visit there. Yongtang Shi is partially supported by the Natural Science
Foundation of Tianjin (No. 17JCQNJC00300) and the National Natural Science Foundation of China. The authors would like to thank Shenwei Huang for his discussion.

\end{document}